\newtheorem{defn}{Definition}
\newtheorem{prop}{Proposition}
\newtheorem{lemma}{Lemma}
\newtheorem{cor}{Corollary}
\newtheorem{ex}{Example}
\newtheorem{rmk}{Remark}
\newtheorem{qstn}{Question}
\newtheorem{thm}{Theorem}
\renewcommand{\P}{\mathbb{P}}
\newcommand{\subseq}{\subseteq}
\newcommand{\Z}{\mathbb{Z}}
\newcommand{\R}{\mathbb{R}}
\newcommand{\ds}{\displaystyle}
\newcommand{\C}{\mathbb{C}}
\newcommand{\F}{\mathbb{F}}
\newcommand{\kar}{\text{char}}
\renewcommand{\P}{\mathbb{P}}
\newcommand{\wilde}{\widetilde}
\newcommand{\tpitchfork}{%
  \vbox{
    \baselineskip\z@skip
    \lineskip-.52ex
    \lineskiplimit\maxdimen
    \m@th
    \ialign{##\crcr\hidewidth\smash{$-$}\hidewidth\crcr$\pitchfork$\crcr}
  }%
}
\newcommand{\Pcal}{\mathcal{P}}
\newcommand{\+}{\oplus}
\title{The geproci property in positive characteristic}
\author{Jake Kettinger}
\affil{Department of Mathematics, University of Nebraska - Lincoln}
\date{}
\begin{document}

\maketitle

\begin{abstract}
 The geproci property is a recent development in the world of geometry. We call a set of points $Z\subseq\P_k^3$ an $(a,b)$-geproci set (for GEneral PROjection is a Complete Intersection) if its projection from a general point $P$ to a plane is a complete intersection of curves of degrees $a\leq b$. Nondegenerate examples known as grids have been known since 2011. Nondegenerate nongrids were found starting in 2018, working in characteristic 0. Almost all of these new examples are of a special kind called half grids.
     
    Before the work in this paper-- based partly on the author's thesis-- only a few examples of geproci nontrivial non-grid non-half grids were known and there was no known way to generate more. Here, we use geometry in the positive characteristic setting to give new methods of producing geproci half grids and non-half grids.
\end{abstract}


\section{Introduction}
While complete intersections have been a topic of much study for many years in algebraic geometry, the study of the geproci property has emerged relatively recently. Much of the groundwork in this study has been laid in the works \cite{CFFHMSS}, \cite{CM}, and \cite{PSS}, which will be cited often in this paper. We will begin with the definition of geproci (from: \textbf{ge}neral \textbf{pro}jection \textbf{c}omplete \textbf{i}ntersection).
\begin{defn}
\normalfont
Let $K$ be an algebraically closed field. A finite set $Z$ in $\P^n_K$ is \textit{geproci} $\left(\text{\textipa{\t{dZ}@"pro\t{tS}i}}\right)$ if the projection $\overline{Z}$ of $Z$ from a general point $P\in\P^n_K$ to a hyperplane $H$ is a complete intersection in $H\cong\P^{n-1}_K$.
\end{defn}

An easy but degenerate example of a geproci set in $\P^n$ is a complete intersection in a hyperplane $\P^{n-1}\cong H\subseq\P^n$. In this paper, we are specifically interested in geproci sets in $\P^3_K$. (No nondegenerate examples are known in $\P^n$, $n>3$.) In the three-dimensional setting, we will specify that a configuration $Z\subseq \P^3_K$ is $(a,b)$-geproci (where $a\leq b$) if the image of $Z$ under a general projection into $\P^2_K$ is the complete intersection of a degree $a$ curve and a degree $b$ curve. We will use the notation $\{a,b\}$-geproci in instances when we do not want to require $a\leq b$.

There are two easy-to-understand types of geproci sets. One type as noted above is any complete intersection in a plane: it will project from a general point isomorphically to another complete intersection in any other plane, and so is geproci. The other type is a grid, which we will now define.
\begin{defn}
\normalfont Given a curve $A\subseq\P^3$ comprising a finite set of $a$ pairwise-disjoint lines a curve $B\subseq \P^3$ comprising a finite set of $b$ pairwise-disjoint lines, such that every line in $A$ intersects every line in $B$ transversely, the $ab$ points of intersection form an $(a,b)$-\textbf{grid}. 
\end{defn}
The set of points $Z$ of an $(a,b)$-grid is $(a,b)$-geproci. The image $\overline{Z}$ of $Z$ under a general projection is equal to the intersection of the images $\overline{A}$ and $\overline{B}$ of $A$ and $B$, which are unions of $a$ lines in the plane and $b$ lines in the plane respectively, and thus $\overline{A}$ and $\overline{B}$ are curves of degrees $a$ and $b$, respectively, meeting at $ab$ points. Thus $\overline{Z}$ is a complete intersection.

These two types (sets of coplanar points and grids) are well understood, so are called \textit{trivial}. What is not yet well understood is how nontrivial geproci sets can arise. The existing work on the geproci property has been done over fields of characteristic 0. What is new with this paper are the results in characteristic $p>0$, starting in the second section. For the rest of this section we will only discuss work which has been done in characteristic 0.

The first nontrivial examples of geproci sets came from the root systems $D_4$ and $F_4$ \cite{CM} and so themselves are called $D_4$ and $F_4$. These are configurations in $\P^3$ containing $12$ points and 24 points, respectively \cite{HMNT}. It was also shown that $D_4$ is the \textit{smallest} nontrivial geproci set \cite{CM}, and the only nontrivial $(3,b)$-geproci set \cite{CFFHMSS}. (See Figure \ref{halfg} for the 12 points of $D_4$ and its 16 sets of 3 collinear points.)

The configurations $D_4$ and $F_4$ are examples of \textit{half grids}.
\begin{defn}\normalfont
A set $Z\subseq \P^3$ is a $\{\mu,\lambda\}$-\textbf{half grid} if $Z$ is a nontrivial $\{\mu,\lambda\}$-geproci set contained in a set of $\mu$ mutually-skew lines, with each line containing $\lambda$ points of $Z$.
\end{defn}
For example, the $D_4$ configuration is a ${4, 3}$-geproci half grid and can be covered by four mutually-skew lines, with each line containing three points, as Figure \ref{halfg} shows. The general projection of an $\{a,b\}$ half grid is a complete intersection of a union of $a$ lines and a degree $b$ curve that is not a union of lines. It is known that there is an $(a,b)$-half grid for each $4\leq a\leq b$ \cite{CFFHMSS}. No other infinite families of nontrivial geproci sets were known before the results in this paper, and only finitely many (indeed, three \cite{CFFHMSS}) non-half grid nontrivial geproci sets were known before the results in the next section. 

\begin{center}
\begin{figure}[H]
\centering
    \includegraphics[scale=0.7]{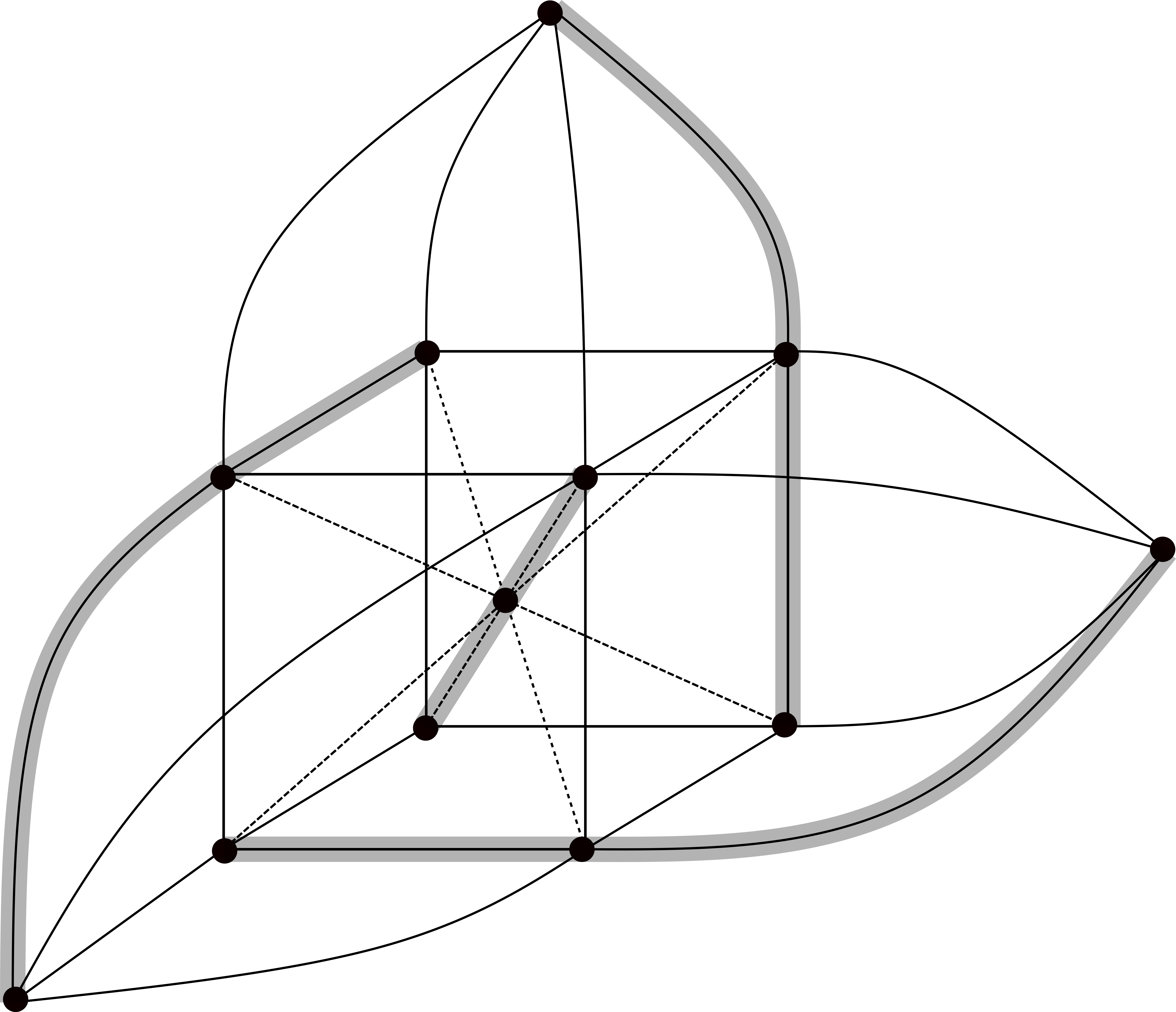}
    \caption{$D_4$ consists of 12 points arranged in 16 sets of 3 collinear points, and is covered by four skew lines as shown.}
    \label{halfg}
    \end{figure}
\end{center}

There seem to be strong links between geproci sets $Z$ and sets $Z$ admitting \textbf{unexpected cones} \cite{CFFHMSS, CM}.

\begin{defn}
\normalfont A finite set $Z\subseq \P^n_k$ admits an \textbf{unexpected} cone of degree $d$ when $$\dim[I(Z)\cap I(P)^d]_d>\max\left(0,\dim[I(Z)]_d-\binom{d+n-1}{n}\right)$$ for a general point $P\in\P^n_K$, where $I(Z)$ is the homogeneous ideal of $Z$ in $K[\P^n]$ and $[I(Z)]_d$ is its homogeneous component of degree $d$ \cite{HMNT, HMTG}. 
\end{defn}
This is said to be unexpected because one expects by a naive dimension count that the vector subspace of homogeneous polynomials in $[I(Z)]_d$ that are singular with multiplicity $d$ at a general point $P$ would have codimension $\ds\binom{n+d-1}{n}$ (since being singular at $P$ to order $d$ imposes $\ds\binom{n+d-1}{n}$ conditions on $[I(Z)]_d$). Therefore it is called unexpected when more such hypersurfaces exist than a naive dimension count would lead one to expect. Chiantini and Migliore showed that every $(a,b)$-grid with $3\leq a\leq b$ admits unexpected cones of degrees $a$ and $b$ \cite{CM}.

\section{The Geproci Property over Finite Fields}
\subsection{Spreads}
While examples of nontrivial geproci configurations (especially nontrivial non-half grids) have proven rather elusive in the characteristic 0 setting, we will see in this paper that they arise quite naturally over finite fields. In the finite field setting, we make generous use of the study of spreads over projective space, which we will define now.
\begin{defn}
\normalfont
Let $\P^{2t-1}_k$ be a projective space of odd dimension over a field $k$. Let $S$ be a set of $(t-1)$-dimensional linear subspaces of $\P^{2t-1}_k$, each of which is definedover $k$. We call $S$ a \textbf{spread} if each point of $\P^{2t-1}_k$ is contained in one and only one member of $S$.
\end{defn}
Over a finite field, spreads always exist for each $t\geq 1$ \cite{BB}. In our three-dimensional case, we have $t=2$. Therefore a spread in $\P^3_k$ will be a set of mutually-skew lines defined over $k$ that cover $\P^3_k$.
\begin{ex}\label{spread}
\normalfont
Here we show an example of a spread based on \cite{BB}. Given a field extension $k\subseq L$ with (as vector spaces) $\dim_k L=t$, we get a map $$\P_k^{2t-1}=\P_k(k^{2t})=\P_k(L^2)\longrightarrow\P_L(L^2)=\P^1_L$$ with linear fibers $\P_k(L)=\P(k^t)=\P_k^{t-1}$, giving a spread. When we take $k=\R$, $t=2$, and $L=\C$, we get $$\P^3_\R\longrightarrow\P^1_\C=S^2.$$ Composing with the antipodal map $S^3\to\P^3_\R$ gives the well-known Hopf fibration $S^3\to S^2$ with fibers $S^1$.
\end{ex}
\begin{ex}
\normalfont
Here we give another construction of spreads for $\P^3$ for fields of positive characteristic based on \cite{BB} and \cite{H}. Let $\F_q$ be a finite field of size $q$ and characteristic $p$, first where $p$ is an odd prime. Let $r\in\F_q$ be such that the polynomial $x^2-r\in\F_q[x]$ is irreducible; that is, $r$ has no square root in $\F_q$. Denote by $L_r(a,b)$ the line in $\P^3_{\F_q}$ through the points $(1,0,a,b)$ and $(0,1,rb,a)$. Denote by $L(\infty)$ the line through the points $(0,0,1,0)$ and $(0,0,0,1)$. Then the set of lines $$S_r=\{L_r(a,b),L(\infty):a,b\in\F_q\}$$ is a spread in $\P^3_{\F_q}$ (since $\P^3_{\F_q}$ has $(q+1)(q^2+1)=q^3+q^2+q+1$ points and one can check (using the fact that $r$ is not a square in $\F_q$) that the lines are skew, but there are $q^2+1$ lines and each line has $q+1$ points).

In the case $\kar\, \F_q=2$, we want to choose $r\in\F_q$ to be such that the polynomial $x^2+x+r$ is irreducible in $\F_q[x]$. Then define $L_r(a,b)$ to be the line in $\P^3_{\F_q}$ through the points $(1,0,a,b)$ and $(0,1,br,a+b)$. Then $S_r=\{L_r(a,b),L(\infty):a,b\in\F_q\}$ is a spread. 

\end{ex}
\begin{thm}\label{main}
Let $\F_q$ be the field of size $q$, where $q$ is some power of a prime. Then $Z=\P^3_{\F_q}\subseq \P^3_{\overline{\F}_q}$ is a $(q+1,q^2+1)$-geproci half grid.
\end{thm}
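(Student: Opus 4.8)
The plan is to realize $\overline Z$ as the complete intersection of two explicit plane curves: the image of a union of skew lines covering $Z$, and the image of an ``unexpected'' cone of degree $q+1$. Fix a spread $S$ of $\P^3_{\F_q}$ (one exists by \cite{BB}); it consists of $q^2+1$ mutually skew $\F_q$-lines, and since each meets $\P^3(\F_q)$ in $q+1$ points and they partition $\P^3(\F_q)$ we have $|Z|=(q+1)(q^2+1)$. Put $A=\bigcup_{\ell\in S}\ell$, a curve of degree $q^2+1$. For a general $P$ — in particular one on none of the finitely many $\F_q$-rational lines — the map $\pi_P$ is injective on $Z$ (if $\pi_P(z)=\pi_P(z')$ then $P$ lies on the $\F_q$-line $\overline{zz'}$), so $|\overline Z|=(q+1)(q^2+1)$; and distinct skew lines cannot project to a common line, so $\overline A$ is a reduced union of $q^2+1$ lines, a plane curve of degree $q^2+1$ through $\overline Z$.

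Next I would build the degree $q+1$ cone. The forms $M_{ij}:=x_ix_j^q-x_i^qx_j$ ($0\le i<j\le 3$) lie in $I(Z)$, because any point of $\P^3(\F_q)$ has a representative with $\F_q$-coordinates, for which $z_j^q=z_j$. Write a combination as $\sum_{i<j}c_{ij}M_{ij}=\sum_{i,j}b_{ij}x_ix_j^q$ with $b=(b_{ij})$ skew-symmetric. After the linear change of coordinates placing $P$ at $[0:0:0:1]$, one checks this form involves only three of the coordinates — i.e. is a cone with vertex $P$ — exactly when $bP=0$ and $bP^{[q]}=0$, where $P^{[q]}=(p_0^q:\dots:p_3^q)$. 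Since $P$ is general it is not $\F_q$-rational, so $P$ and $P^{[q]}$ are linearly independent; then the space $W$ of linear forms vanishing at both $P$ and $P^{[q]}$ is $2$-dimensional, $\wedge^2W$ is $1$-dimensional, and so there is a nonzero such $b$. This produces a nonzero degree $q+1$ form $F_P=\sum c_{ij}M_{ij}$ vanishing on $Z$ and defining a cone with vertex $P$, whose image $\overline C:=\pi_P(\{F_P=0\})$ is a plane curve of degree $q+1$ containing $\overline Z$.

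The decisive point is that $\overline A$ and $\overline C$ share no component. Given that, B\'ezout makes $\overline A\cap\overline C$ zero-dimensional of length $(q+1)(q^2+1)$; as it contains the $(q+1)(q^2+1)$ reduced points of $\overline Z$ it equals $\overline Z$, and a reduced zero-dimensional complete intersection of two plane curves has ideal generated by its two defining forms, so $I(\overline Z)$ is generated in degrees $q+1$ and $q^2+1$ and $Z$ is $(q+1,q^2+1)$-geproci. A shared component would be some $\overline\ell$, $\ell\in S$, i.e. $F_P$ would vanish on the plane $\langle P,\ell\rangle$. Parametrizing $\ell=\langle u,v\rangle$ with $u,v\in\F_q^{4}$ and using $u_j^q=u_j$, $v_j^q=v_j$, a short computation gives $F_P(su+tv)=(u^{\mathsf T}bv)(st^q-s^qt)$, so $F_P$ vanishes on $\langle P,\ell\rangle$ iff $u^{\mathsf T}bv=0$; unwinding the construction of $b$ from $W$, this says precisely that the line $\Lambda_P:=\langle P,P^{[q]}\rangle$ meets $\ell$, equivalently $\det[\,P\mid P^{[q]}\mid u\mid v\,]=0$. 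Expanding that determinant along its first two columns exhibits it as a nonzero form of degree $q+1$ in $P$ (its coefficients are the Pl\"ucker coordinates of $\ell$, not all zero), so for each $\ell$ the bad locus is a proper hypersurface and a general $P$ avoids all $q^2+1$ of them. Finally $Z$ is nontrivial: it spans $\P^3$, and it is not a grid because every line meets $\P^3(\F_q)$ in at most $q+1<q^2+1$ points, so $Z$ admits no two transverse rulings. Being a nontrivial $(q+1,q^2+1)$-geproci set lying on the $q^2+1$ skew lines of $S$ with $q+1$ of its points on each, $Z$ is a half grid.

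The routine parts are the B\'ezout bookkeeping, the injectivity of $\pi_P$, and the non-grid argument. The heart of the argument — and the step I expect to be delicate — is producing the unexpected cone from the Frobenius forms $M_{ij}$ and then verifying it absorbs none of the projected spread lines: the computation $F_P(su+tv)=(u^{\mathsf T}bv)(st^q-s^qt)$ and the translation of ``$u^{\mathsf T}bv=0$'' into ``$\Lambda_P$ meets $\ell$''.
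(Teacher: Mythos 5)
Your proof is correct and follows essentially the same route as the paper: your degree-$(q+1)$ form $F_P=\sum c_{ij}M_{ij}$ with $c\in\wedge^2W$ is exactly the Laplace expansion (along the first two rows) of the paper's determinant $\det M$ whose rows are $P$, $P^{[q]}$, $(x,y,z,w)$, $(x^q,y^q,z^q,w^q)$, and your degree-$(q^2+1)$ curve is likewise the projected cone over a spread, with the common-component analysis reducing in both cases to restricting the Frobenius form to an $\F_q$-line and getting $(u^{\mathsf T}bv)(st^q-s^qt)$. The only differences are presentational: you handle the genericity of ``$u^{\mathsf T}bv\neq0$'' by exhibiting $\det[\,P\mid P^{[q]}\mid u\mid v\,]$ as a nonzero degree-$(q+1)$ polynomial in $P$ where the paper normalizes coordinates and checks one witness point, and you make explicit the injectivity of $\pi_P$ on $Z$ and the non-grid/nontriviality check, which the paper leaves implicit.
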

\begin{proof}
First we will show that there is a degree $(q+1)$ cone containing $Z$ having a singularity of multiplicity $q+1$ at a general point $P\in\P^3_{\overline{\F}_q}$. Let $P=(a,b,c,d)\in\P^3_{\overline{\F}_q}$. Let \begin{align*}
   M= \begin{pmatrix}a&    b&   c&   d\\
a^q&b^q&c^q&d^q\\
x&    y&    z&   w\\
x^q&y^q&z^q&w^q\\ \end{pmatrix}.
\end{align*}
Then we claim $F=\det M$
is such a cone.

First note that $F$ contains every point of $Z$, because $x^q=x$ for each $x\in\F_q$. Furthermore, the terms of $F$ can be combined into groups of 4 so that $F$ is the sum of terms of the form 
\begin{align*}
    (x^qyc^qd-x^qwc^qb)-(z^qya^qd-z^qwa^qb)=x^qc^q(yd-wb)-z^qa^q(yd-wb)\\
    =(x^qc^q-z^qa^q)(yd-wb)=(xc-za)^q(yd-wb)\in I^{q+1}((a,b,c,d))
\end{align*}
Thus $F$ is a cone $C_1$ of degree $q+1$ with vertex $(a,b,c,d)$ of multiplicity $q+1$.

 Now we will show there is a degree $q^2+1$ cone $C_2$ containing $Z$ having a general point $P$ of multiplicity $q^2+1$. By Example \ref{spread}, the space $\P^3_{\F_q}$ admits a spread of $q^2+1$ mutually-skew lines that covers all of $\P^3_{\F_q}$. Each line together with a fixed general point $P$ determines a plane. The union of the planes gives $C_2$.
 
 Projecting the $q^2+1$ lines from a general point $P\in\P^3_{\overline{\F}_q}$ to a general plane $\Pi=\P^2_{\overline{\F}_q}$ yields a set of $q^2+1$ lines in $\P^2_{\F_q}$ containing the $(q+1)(q^2+1)$ points of the image of $Z$.
 
  Now we will show that $C_1$ and $C_2$ do not have components in common; to this end, we will show that $C_1$ contains no line in $\P^3_{\overline{\F}_q}$ defined over $\P^3_{\F_q}$. Note that $C_1$ vanishes on such a line if and only if $F=0$, where $F=\det M$ and $$M=\begin{pmatrix}a&b&c&d\\ a^q&b^q&c^q&d^q\\ X&Y&Z&W\\X^q&Y^q&Z^q&W^q\end{pmatrix}$$ for $X=\eta_0u+\mu_0v$, $Y=\eta_1u+\mu_1v$, $Z=\eta_2u+\mu_2v$, and $W=\eta_3u+\mu_3v$ for all $(u,v)\in\P^1_{\F_q}$ where $(\eta_0,\eta_1,\eta_2,\eta_3)$ and $(\mu_0,\mu_1,\mu_2,\mu_3)$ are points on the line. If $r_1$, $r_2$, $r_3$, and $r_4$ are the rows of a $4\times 4$ matrix, we will denote the determinant of that matrix by $|r_1,r_2,r_3,r_4|$. In particular, taking the $r_i$ to be the rows of $M$, we have $F=|r_1,r_2,r_3,r_4|=|r_1,r_2,\eta u+\mu v,\eta u^q+\mu v^q|=0$ for all $(u,v)$.

  Since determinants are multilinear, we have \begin{align*}
  &|r_1,r_2,\eta u+\mu v,\eta u^q+\mu v^q|\\=&|r_1,r_2,\eta u,\eta u^q|+|r_1,r_2,\eta u,\mu v^q|+|r_1,r_2,\mu v,\eta u^q|+|r_1,r_2,\mu v,\mu v^q|\\=&|r_1,r_2,\eta u,\eta u|u^{q-1}+|r_1,r_2,\eta u,\mu v^q|+|r_1,r_2,\mu v,\eta u^q|+|r_1,r_2,\mu v,\mu v|v^{q-1}\\=&|r_1,r_2,\eta u,\mu v^q|+|r_1,r_2,\mu v,\eta u^q|=|r_1,r_2,\eta,\mu|uv^q+|r_1,r_2,\mu,\eta|u^qv\\=&|r_1,r_2,\eta,\mu|uv^q-|r_1,r_2,\eta,\mu|u^qv=|r_1,r_2,\eta,\mu|(v^{q-1}-u^{q-1})uv.
\end{align*} 
But $v^{q-1}-u^{q-1}\neq 0$ unless $u=v=0$ or $u/v\in\F_q$. Therefore $F$ is 0 for all $(u,v)$ only if $|r_1,r_2,\eta,\mu|=0$. By an appropriate choice of coordinates we get $\eta=(1,0,0,0)$, $\mu=(0,1,0,0)$, $r_1=(a',b',c',d')$, and $r_2=(a'^q,b'^q,c'^q,d'^q)$ for some point $(a',b',c',d')$ which is general since $(a,b,c,d)$ is general. Since $|r_1,r_2,\eta,\mu|$ is nonzero for $a'=b'=0$, $c'=1$, $d'\in\overline{\F}_q\setminus\F_q$, we see $|r_1,r_2,\eta,\mu|\neq 0$ for general $(a',b',c',d')$. We conclude that $C_1$ does not contain a line of $\P^3_{\overline{\F}_q}$ defined over $\P^3_{\F_q}$, and so $C_1$ has no components in common with $C_2$. (In fact, since $C_1$ contains the $q+1$ points of each line of $\P^3_{\overline{\F}_q}$ defined over $\P^3_{\F_q}$ but does not contain the line, $C_1$ meets each line of $\P^3_{\overline{\F}_q}$ defined over $\P^3_{\F_q}$ transversely.) Thus $C_1\cap C_2$ is a curve of degree $(q+1)(q^2+1)$ and contains the $(q+1)(q^2+1)$ lines through $P$ and points of $Z$, hence $C_1\cap C_2$ is exactly this set of lines.

So $\overline{Z}$ is a set of $(q+1)(q^2+1)$ points, which is the intersection of the curves $C_1\cap\Pi$ (of degree $q+1$) and $C_2\cap\Pi$ (of degree $q^2+1$), so $\overline{Z}$ is a $(q+1,q^2+1)$-complete intersection. Thus $Z$ is $(q+1,q^2+1)$-geproci.

\end{proof}
Furthermore, the degree $q+1$ and $q^2+1$ cones in the above proof are unexpected. We will show this with the help of the following lemma.
\begin{lemma}\label{thingy}
     Let $Z=\P^n_{\F_q}$ in variables $x_0,\dots,x_n$. Then $\dim[I(Z)]_{q+1}=1+2+\cdots+n=\ds\binom{n+1}{2}$.
\end{lemma}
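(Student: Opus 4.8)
The plan is to establish the two inequalities $\dim[I(Z)]_{q+1}\ge\binom{n+1}{2}$ and $\dim[I(Z)]_{q+1}\le\binom{n+1}{2}$ separately. For the lower bound, consider the $\binom{n+1}{2}$ forms $g_{ij}=x_i^qx_j-x_ix_j^q$ for $0\le i<j\le n$: each vanishes on $Z$, since a point of $\P^n_{\F_q}$ has a representative with all coordinates in $\F_q$, for which $x^q=x$; and they are linearly independent because, as $q\ge 2$, the monomials $x_i^qx_j$ with $i\ne j$ are pairwise distinct and the $g_{ij}$ have pairwise disjoint two-term supports. Hence $\dim[I(Z)]_{q+1}\ge\binom{n+1}{2}$, and the entire content is the reverse inequality, which I would prove by induction on $n$. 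The cases $n=0,1$ are immediate: for $n=1$ the space is spanned by $g_{01}$, the unique (up to scalar) binary form of degree $q+1$ vanishing at the $q+1$ points of $\P^1_{\F_q}$.

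For the inductive step I would cut by the hyperplane $H=\{x_0=0\}$. The $\F_q$-points of $H$ form a copy of $\P^{n-1}_{\F_q}$, while the $\F_q$-points off $H$ form the affine space $\F_q^n$ in the chart $x_0\ne 0$. Restriction to $H$ gives the exact sequence
\[
0\longrightarrow[I(Z):x_0]_q\xrightarrow{\ \cdot x_0\ }[I(Z)]_{q+1}\longrightarrow W\longrightarrow 0,
\]
where $W\subseteq\overline{\F}_q[x_1,\dots,x_n]_{q+1}$ is the image of the restriction map. A form vanishing on $Z$ restricts on $H$ to a form vanishing on the $\F_q$-points of $H$, so $W\subseteq[I(\P^{n-1}_{\F_q})]_{q+1}$, which has dimension $\binom n2$ by induction; on the other hand $W$ already contains the $\binom n2$ linearly independent forms $g_{ij}$ with $1\le i<j\le n$, which do not involve $x_0$. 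Hence $\dim W=\binom n2$, and since $n+\binom n2=\binom{n+1}{2}$ it remains only to prove $\dim[I(Z):x_0]_q=n$.

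This affine computation is where I expect the real work, and the step most prone to error. A degree-$q$ form $G$ satisfies $x_0G\in I(Z)$ exactly when $G$ vanishes at every $\F_q$-point off $H$ (the condition at points of $H$ being vacuous), i.e. when its dehomogenization $g=G(1,x_1,\dots,x_n)$, a polynomial of degree $\le q$, vanishes on $\F_q^n$; and $G\mapsto g$ is a bijection between degree-$q$ forms in $x_0,\dots,x_n$ and polynomials of degree $\le q$ in $x_1,\dots,x_n$. So I must count the polynomials of degree $\le q$ lying in $J=(x_1^q-x_1,\dots,x_n^q-x_n)=I(\F_q^n)$. The key fact is that $\{x_i^q-x_i\}$ is a Gröbner basis of $J$ for any monomial order refining degree, because its leading terms $x_i^q$ are pairwise coprime (Buchberger's first criterion); consequently reduction modulo $J$ is degree-non-increasing and the standard monomials are exactly those with all exponents $\le q-1$. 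Among monomials of degree $\le q$ the only non-standard ones are $x_1^q,\dots,x_n^q$, so
\[
\dim\bigl(J\cap\overline{\F}_q[x_1,\dots,x_n]_{\le q}\bigr)=\binom{n+q}{n}-\Bigl(\binom{n+q}{n}-n\Bigr)=n,
\]
and in fact this space is spanned by $x_1^q-x_1,\dots,x_n^q-x_n$ (equivalently $[I(Z):x_0]_q$ is spanned by $x_i^q-x_0^{q-1}x_i$). The subtlety to watch is that $J$ does contain many degree-$(q+1)$ elements — the Koszul relations among the $x_i^q-x_i$, and the forms $x_i^qx_j-x_ix_j^q$ — but nothing of degree $\le q$ beyond the span of its generators; the Gröbner-basis fact is precisely what makes this clean, and one also uses $I(\F_q^n)=J$, immediate since $J$ is radical with $\overline{\F}_q[x_1,\dots,x_n]/J$ the ring of functions on $\F_q^n$. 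Feeding $\dim[I(Z):x_0]_q=n$ into the exact sequence yields $\dim[I(Z)]_{q+1}=n+\binom n2=\binom{n+1}{2}$.
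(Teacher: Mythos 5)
Your proof is correct, and its skeleton is the same as the paper's: induction on $n$, slicing by a coordinate hyperplane, and the short exact sequence relating $[I(Z)]_{q+1}$ to the degree-$q$ ideal of the affine points and the degree-$(q+1)$ ideal of $\P^{n-1}_{\F_q}$. The two substantive steps, however, are justified differently, and in both cases your versions are at least as solid. For the image of the restriction map, the paper proves surjectivity onto $[I(Z')]_{q+1}$ geometrically, by observing that any form in the variables $x_0,\dots,x_{n-1}$ vanishing on $Z'$ is a cone with vertex $(0,\dots,0,1)$ and hence vanishes on all of $Z$; you instead sandwich the image between the span of the explicit forms $x_i^qx_j-x_ix_j^q$ (which, incidentally, is exactly the paper's base-case form in disguise: $x_0(x_0-x_1)\cdots(x_0-(q-1)x_1)x_1=x_0^qx_1-x_0x_1^q$) and the inductively known $[I(\P^{n-1}_{\F_q})]_{q+1}$. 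For the kernel, the paper asserts that the $q^n$ affine points form a complete intersection of $n$ forms of degree $q$ and reads off $\dim[I(Y)]_q=n$; this is the thinnest point of the published argument, since one must know that the ideal of the \emph{reduced} point set has no degree-$q$ elements beyond the span of those generators. Your Gr\"obner-basis computation with $(x_1^q-x_1,\dots,x_n^q-x_n)$ --- pairwise coprime leading terms, degree-non-increasing reduction, standard monomials with exponents at most $q-1$ --- supplies exactly the missing verification, so your write-up is the more self-contained of the two.
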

\begin{proof}
    We will induct on $n$, starting with $n=1$. The product $$x_0(x_0-x_1)(x_0-2x_1)\cdots(x_0-(q-1)x_1)x_1$$ is the unique $q+1$ form (up to scalar multiplication) vanishing on all points of $Z$. So $\dim[I(Z)]_{q+1}=1$.

    Now let $n>1$ and $Z'=V(x_n)\subseq Z=\P^n_{\F_q}$, so we can regard $Z'$ as $Z'=\P^{n-1}_{\F_q}$. We can regard each element $f\in[I(Z')]_{q+1}$ as a form in the variables $x_0,\dots,x_{n-1}$ and thus defined over $\P^n$. Using this, we can define a map $\rho:[I(Z)]_{q+1}\to[I(Z')]_{q+1}$ by $\rho(f(x_0,\dots,x_{n-1},x_n))=f(x_0,\dots,x_{n-1},0)$. We can see that $\rho$ is surjective because each element $g\in[I(Z')]_{q+1}$ defines a cone over $Z'$ with vertex $v=(0,\dots,0,1)\in\P^n$. Thus $g$ vanishes at every line through $v$ and a point of $Z'$. But every point of $Z$ is on such a line, so $g\in[I(Z)]_{q+1}$, and thus $\rho$ is surjective.
    
    Now let $Y$ be the complement of $Z'$ in $Z$. Then we have $x_n[I(Y)]_{q}\subseq [I(Z)]_{q+1}$. Furthermore, for all $f\in [I(Z)]_{q+1}$, we see that $\rho(f)=0$ if and only if $f=0$ or $f=x_n\cdot h$ for some degree $q$ polynomial $h$ vanishing on $Y$. Hence $x_n[I(Y)]_{q}=\ker \rho$. This gives us the short exact sequence 
    \begin{center}
        \begin{tikzcd}
            0\arrow{r}&x_n[I(Y)]_{q}\arrow{r} & \left[I(Z)\right]_{q+1}\arrow{r} & \left[I(Z')\right]_{q+1}\arrow{r}&0\\
        \end{tikzcd}
    \end{center}
    where $\dim [I(Z')]_{q+1}=1+\cdots+(n-1)$ by the induction hypothesis. Now we must show that $\dim x_n[I(Y)]_q=n$. But $\dim x_n[I(Y)]_q=\dim[I(Y)]_q$ and $Y$ is a complete intersection of $n$ forms of degree $q$. For example, we can cut out $Y$ by the $n$ forms given by $$x_i(x_i-x_n)(x_i-2x_n)\cdots(x_i-(q-1)x_n)$$ for $0\leq i\leq n-1$. Hence $\dim[I(Y)]_q=n$, and so $\dim[I(Z)]_{q+1}=\dim[I(Z')]_{q+1}+\dim[I(Y)]_q=1+\cdots+n$.
\end{proof}
\begin{prop}
    The degree $q+1$ cone and degree $q^2+1$ cone in the proof of Theorem \ref{main} are unexpected. 
\end{prop}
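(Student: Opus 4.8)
The plan is to verify the definition of "unexpected cone" directly for $Z = \P^3_{\F_q}$ at both degrees $d = q+1$ and $d = q^2+1$, using Lemma~\ref{thingy} (with $n=3$) to compute $\dim[I(Z)]_d$ and an elementary dimension count for $\dim[I(P)^d]_d$. Recall the condition: a cone of degree $d$ exists unexpectedly when
\[
\dim\big[I(Z)\cap I(P)^d\big]_d \;>\; \max\!\left(0,\ \dim[I(Z)]_d - \binom{d+2}{3}\right)
\]
for general $P\in\P^3_{\overline{\F}_q}$. So first I would pin down the right-hand side in each case, and then exhibit a nonzero element of the left-hand side — but Theorem~\ref{main} has already done the geometric work of producing the cones $C_1$ (degree $q+1$, vertex $P$ of multiplicity $q+1$) and $C_2$ (degree $q^2+1$, vertex $P$ of multiplicity $q^2+1$), so $\dim[I(Z)\cap I(P)^d]_d \geq 1$ in both cases. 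Hence it suffices to show the right-hand side equals $0$.

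For $d = q+1$: Lemma~\ref{thingy} gives $\dim[I(Z)]_{q+1} = \binom{4}{2} = 6$, while $\binom{q+1+2}{3} = \binom{q+3}{3}$, which for $q\geq 2$ is at least $\binom{5}{3} = 10 > 6$; so $\dim[I(Z)]_{q+1} - \binom{q+3}{3} < 0$ and the max is $0$. Since $C_1$ contributes a nonzero form to $[I(Z)\cap I(P)^{q+1}]_{q+1}$, the strict inequality $1 > 0$ holds, so the degree $q+1$ cone is unexpected. I should also note $q=2$ is the worst case and the gap only widens as $q$ grows, so no separate small-$q$ check is needed (and $q\geq 2$ always since $q$ is a prime power; one might remark on $q$ as small as $2$).

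For $d = q^2+1$: here I cannot invoke Lemma~\ref{thingy} directly, so the main obstacle is bounding $\dim[I(Z)]_{q^2+1}$. The point is that $|Z| = (q+1)(q^2+1) = q^3+q^2+q+1$ points impose independent conditions "eventually," and in any case $\dim[I(Z)]_{q^2+1} \leq \dim K[x_0,\dots,x_3]_{q^2+1} = \binom{q^2+4}{3}$; meanwhile $\binom{d+2}{3} = \binom{q^2+3}{3}$. The difference $\binom{q^2+4}{3} - \binom{q^2+3}{3} = \binom{q^2+3}{2}$, which is far larger than $|Z|$, so this crude bound is not enough — I need $\dim[I(Z)]_{q^2+1} \leq \binom{q^2+4}{3} - |Z|$, i.e.\ that the $q^3+q^2+q+1$ points of $\P^3_{\F_q}$ impose independent conditions on forms of degree $q^2+1$. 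This should follow because in degree $q^2+1 \geq |Z|$ (check: $q^2+1$ vs $q^3+q^2+q+1$ — false for $q\geq 2$!), hmm; so instead I would argue independence via an explicit separating-forms construction or by citing that a reduced finite set in $\P^n$ imposes independent conditions on forms of sufficiently high degree and checking $q^2+1$ is high enough here (the regularity of $\P^3_{\F_q}$ is governed by its structure as a union of the $q+1$ parallel hyperplane sections $\P^2_{\F_q}$). Concretely: one can show $h^1(\mathcal{I}_Z(q^2+1)) = 0$ by the same hyperplane-slicing induction used in Lemma~\ref{thingy}, or by exhibiting for each point a form of degree $q^2+1$ in $I(Z)$ nonvanishing there. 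Granting $\dim[I(Z)]_{q^2+1} = \binom{q^2+4}{3} - (q^3+q^2+q+1)$, I then compute $\binom{q^2+4}{3} - (q^3+q^2+q+1) - \binom{q^2+3}{3} = \binom{q^2+3}{2} - (q^3+q^2+q+1)$, which is positive for all $q\geq 2$, so unfortunately the max is \emph{not} obviously $0$ here — meaning for $d=q^2+1$ I must actually compare $\dim[I(Z)\cap I(P)^{q^2+1}]_{q^2+1}$ against a positive number. So the real task at this degree is to show the space of degree $q^2+1$ cones with vertex $P$ through $Z$ has dimension strictly exceeding $\binom{q^2+3}{2} - (q^3+q^2+q+1)$: the spread gives a large supply of such cones (any union of the $q^2+1$ spread-planes through $P$, and more generally products coming from the pencil structure on each $\P^2_{\F_q}$ slice), and I would count these to beat the bound. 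I expect this counting of the cone system via the spread to be the crux of the argument.
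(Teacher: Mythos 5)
Your treatment of the degree $q+1$ cone is correct and is exactly the paper's argument: Lemma~\ref{thingy} gives $\dim[I(Z)]_{q+1}=6<\binom{q+3}{3}$, so the maximum in the definition is $0$ and the cone $C_1$ from Theorem~\ref{main} supplies a nonzero element of $[I(Z)\cap I(P)^{q+1}]_{q+1}$. You also correctly set up the degree $q^2+1$ case: the paper likewise shows the $(q+1)(q^2+1)$ points impose independent conditions on forms of degree $q^2+1$ by exhibiting, for each $Q$, an explicit separating form (a product of unions of hyperplanes times a power of $w$), and you correctly observe that the resulting expected count $\binom{q^2+3}{2}-(q^3+q^2+q+1)$ is \emph{positive}, so one must actually produce a large linear system of cones rather than just one.

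That last step is where your proposal has a genuine gap. Your plan to beat the bound by counting cones built from spreads will not work as stated: a spread yields essentially a single degree $q^2+1$ cone through $Z$ with vertex $P$ (the union of all $q^2+1$ planes joining $P$ to the spread lines --- no proper subunion contains $Z$), and even allowing all spreads you would need to control the dimension of the linear span of these finitely many forms, which is not a counting argument. The paper's key idea is different and much cheaper: multiply the degree $q+1$ cone $F$ by the full linear system $[I(P)^{q^2-q}]_{q^2-q}$. Every such product lies in $[I(Z)\cap I(P)^{q^2+1}]_{q^2+1}$ (it vanishes on $Z$ because $F$ does, and has multiplicity $(q+1)+(q^2-q)=q^2+1$ at $P$), giving the lower bound $\dim[I(Z)\cap I(P)^{q^2+1}]_{q^2+1}\geq\binom{q^2-q+2}{2}$, and one checks $\binom{q^2-q+2}{2}>\binom{q^2+4}{3}-(q^2+1)(q+1)-\binom{q^2+3}{3}$ for all $q\geq 3$. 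Note also that $q=2$ is a genuine boundary case your plan does not anticipate: there the inequality fails ($6\not>6$), and the paper resorts to a direct Macaulay2 computation showing $\dim[I(Z)\cap I(P)^5]_5=7>6$.
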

\begin{proof}
From Lemma \ref{thingy}, we see that $\dim[I(Z)]_{q+1}=6$. In particular, $[I(Z)]_{q+1}$ is generated by the $2\times 2$ minors of the matrix $$\begin{pmatrix}
        x&y&z&w\\x^q&y^q&z^q&w^q\\
    \end{pmatrix}.$$  Since $6-\ds\binom{q+3}{3}<0$ for $q\geq 2$, and $\dim[I(Z)\cap I(P)^{q+1}]_{q+1}\geq 1>0$, we have that the above $q+1$ cone is indeed unexpected.

    To show that the degree $q^2+1$ cone is unexpected, we will first show that the $(q^2+1)(q+1)$ points of $\P^3_{\F_q}$ impose independent conditions on forms of degree $q^2+1$. We will show that for each $Q\in\P^3_{\F_q}$ that there is a degree $q^2+1$ form vanishing at every point $\P^3_{\F_q}$ except $Q$. Without loss of generality, we will take $Q=(0,0,0,1)$.

    We will start with the case $q\neq 2$. Then the union of planes given by the product $$\pi_x=\prod_{i=0}^{q-1}(w-ix)$$ contains every point of $\P^3_{\F_q}$ except those on the affine plane $\{(0,*,*,1)\}$. Similarly, the products $$\pi_y=\prod_{i=0}^{q-1}(w-iy)\text{ and }\pi_z=\prod_{i=0}^{q-1}(w-iz)$$ vanish everywhere except on the affine planes $\{(*,0,*,1)\}$ and $\{(*,*,0,1)\}$, respectively. Therefore, the product $\pi_x\pi_y\pi_z$ vanishes everywhere on $\P^3_{\F_q}$ except the point $(0,0,0,1)$. Since $\deg\pi_x\pi_y\pi_z=3q$, taking $\pi=w^{q^2-3q+1}\pi_x\pi_y\pi_z$ gives us a degree $q^2+1$ form vanishing at every point of $\P^3_{\F_q}$ except $Q$. Note that since $q>2$, $q^2-3q+1>0$, so $\pi$ is well-defined.

    Since the points of $Z=\P^3_{\F_q}$ impose independent conditions on the $q^2+1$ forms, we have $$\dim[I(Z)]_{q^2+1}=\binom{q^2+4}{3}-(q^2+1)(q+1).$$ Using our degree $q+1$ cone from the proof of Theorem \ref{main} as $F$, we have $$F[I(P)^{q^2-q}]_{q^2-q}\subseq [I(Z)+I(P)^{q^2+1}]_{q^2+1},$$ giving us $$\dim[I(P)^{q^2-q}]_{q^2-q}\leq\dim[I(Z)+I(P)^{q^2+1}]_{q^2+1}.$$ We know that $\dim[I(P)^{q^2-q}]_{q^2-q}=\ds\binom{q^2-q+3}{3}-\binom{q^2-q+2}{3}=\binom{q^2-q+2}{2}$, so in order to show the degree $q^2+1$ cone is unexpected it is sufficient to see that the following inequality holds: $$\binom{q^2-q+2}{2}>\binom{q^2+4}{3}-(q^2+1)(q+1)-\binom{q^2+3}{3}.$$ This inequality holds for $q\geq 3$. Thus for all prime powers $q\geq 3$, the degree $q^2+1$ cone in the proof of Theorem \ref{main} is unexpected.

     Now for the case $q=2$: First we wish to show that the fifteen points of $Z=\P^3_{\F_2}$ impose independent conditions on the quintic forms. Again taking $Q=(0,0,0,1)$ without loss of generality, we can take $\pi=w^2(w+x)(w+y)(w+z)$ as our degree 5 form vanishing at every point of $\P^3_{\F_2}$ except $Q$. Therefore the points indeed impose independent conditions. Thus $$\dim[I(Z)]_5=\binom{5+3}{3}-15=41$$ and so $\dim[I(Z)]_5-\ds\binom{5+2}{3}=41-35=6.$ A computation in Macaulay2 reveals that $$\dim[I(Z)+I(P)^5]_5=7>6,$$ thus the degree $q^2+1$ cone from the proof of Theorem \ref{main} is unexpected for $q=2$ as well. 
     \end{proof}
     The Macaulay2 commands used to show $\dim[I(Z)+I(P)^5]_5=7$ are as follows.

\vspace{\baselineskip}
     
     \texttt{i1:K=frac(ZZ/2[a,b,c,Degrees=>\{0,0,0\}]);}

    \texttt{i2:R=K[x,y,z,w];}

    \texttt{i3:V=\{\{0,0,0,1\},\{0,0,1,0\},\{0,0,1,1\},\{0,1,0,0\},\{0,1,0,1\},\\
    \{0,1,1,0\},\{0,1,1,1\},\{1,0,0,0\},\{1,0,0,1\},\{1,0,1,0\},\\
    \{1,0,1,1\},\{1,1,0,0\},\{1,1,0,1\},\{1,1,1,0\},\{1,1,1,1\}\};}

\texttt{i4:IV=\{\};}

\texttt{i5:for i from 0 to \#V-1 do \{A=trim ideal(V\_i\_0*y-V\_i\_1*x,\\ V\_i\_0*z-V\_i\_2*x,V\_i\_0*w-V\_i\_3*x,V\_i\_1*z-V\_i\_2*y,V\_i\_1*w-V\_i\_3*y,\\ V\_i\_2*w-V\_i\_3*z);IV=IV|\{A\}\};}

\texttt{i6:I=intersect(IV);}

\texttt{i7:P=ideal(x-a*w,y-b*w,z-c*w);}

\texttt{i8:J=intersect(I,P\^{}5);}

\texttt{i9:M=mingens(J);}

\texttt{i10:for i from 0 to numgens(source(M))-1 do print degree(M\_i)}

\subsection{Maximal Partial Spreads}

Of particular interest to the hunt for geproci sets is the existence of \textit{maximal partial spreads}.
\begin{defn}
\normalfont    A \textbf{partial spread} of $\P^3_{\F_q}$ with \textbf{deficiency} $d$ is a set of $q^2+1-d$ mutually-skew lines of $\P^3_{\F_q}$. A \textbf{maximal partial spread} is a partial spread of positive deficiency that is not contained in any larger partial spread. We will denote the set of points of $\P^3_{\F_q}$ contained in the lines in a spread $S$ by $\Pcal(S)$.
    \end{defn}

Maximal partial spreads allow us to construct examples of many geproci sets as subsets of $\P^3_{\F_q}$, using the following corollary.

\begin{cor}
Let $S$ be a partial spread of $s$ lines in $\P^3_{\F_q}$. Then the set of points $\Pcal(S)\subseq\P^3_{\F_q}$ is $\{s,q+1\}$-geproci.
\end{cor}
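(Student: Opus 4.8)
The plan is to mimic the structure of the proof of Theorem~\ref{main}, replacing the full spread by the partial spread $S$ and adjusting the degrees accordingly. The two cones from that proof play distinct roles: the degree-$(q+1)$ cone $C_1$ cut out by $F=\det M$ depended only on the ambient point set $\P^3_{\F_q}$ and \emph{not} on the spread, so it survives verbatim and still contains $\Pcal(S)\subseteq\P^3_{\F_q}$ with a point of multiplicity $q+1$ at a general $P$. The degree-$(q^2+1)$ cone $C_2$, however, was built as the union of the planes spanned by $P$ and the $q^2+1$ lines of a full spread; replacing the full spread by the $s$ lines of $S$ produces instead a cone $C_2'$ of degree $s$, namely the union of the $s$ planes $\langle P, \ell\rangle$ for $\ell\in S$, which contains $\Pcal(S)$ and has a point of multiplicity $s$ at $P$.

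\textbf{First} I would record that $\Pcal(S)$ consists of exactly $s(q+1)$ points, since the $s$ lines of a partial spread are pairwise skew and each contains $q+1$ points of $\P^3_{\F_q}$. \textbf{Next} I would invoke the key geometric fact already established inside the proof of Theorem~\ref{main}: $C_1$ contains \emph{no} line of $\P^3_{\overline{\F}_q}$ that is defined over $\F_q$ (indeed it meets each such line transversely in its $q+1$ rational points). Every line of $S$ is defined over $\F_q$, so $C_1$ and $C_2'$ share no common component --- $C_2'$ is a union of planes each containing one line of $S$, and if a plane component of $C_2'$ were contained in $C_1$ then $C_1$ would contain that line of $S$, a contradiction. \textbf{Then}, since $C_1$ has degree $q+1$ and $C_2'$ has degree $s$ with no common component, B\'ezout gives that $C_1\cap C_2'$ is a curve of degree $s(q+1)$; it contains the $s(q+1)$ lines joining $P$ to the points of $\Pcal(S)$ (each such line lies on $C_2'$ by construction and on $C_1$ since $C_1$ has multiplicity $q+1\ge 1$ at $P$ and passes through the point of $\Pcal(S)$, forcing it to contain the whole line). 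Counting degrees, $C_1\cap C_2'$ is \emph{exactly} that set of $s(q+1)$ lines. Projecting from $P$ to a general plane $\Pi$, the image $\overline{\Pcal(S)}$ is the set-theoretic intersection of $C_1\cap\Pi$ (degree $q+1$) and $C_2'\cap\Pi$ (degree $s$), hence a complete intersection, so $\Pcal(S)$ is $\{s,q+1\}$-geproci.

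\textbf{The main obstacle} I anticipate is the transversality/scheme-theoretic bookkeeping in the last step: to conclude $\overline{\Pcal(S)}$ is a complete intersection one wants the two plane curves to meet in exactly $s(q+1)$ reduced points, which requires that for general $P$ the lines $\overline{\ell}$ ($\ell\in S$) are distinct lines in $\Pi$ and that no two of the $s(q+1)$ joining-lines through $P$ coincide --- equivalently that no two points of $\Pcal(S)$ are collinear with $P$. Since $\Pcal(S)\subseteq\P^3_{\F_q}$ is a fixed finite point set and $P$ ranges over the infinite field $\overline{\F}_q$, a generic $P$ avoids the finitely many secant lines of $\Pcal(S)$, so this is harmless, but it is the place where "general $P$" must be used carefully. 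One should also note that when $d=0$ (full spread) this recovers Theorem~\ref{main}, and that for $s\ge 4$ with $s\le q^2+1$ one additionally gets, by the half-grid structure on the $s$ skew lines of $S$, that $\Pcal(S)$ is a genuine geproci set in the $(a,b)$ sense with $a=\min(s,q+1)$; but establishing nontriviality (non-grid, and when it is or is not a half grid) is a separate matter not required by the stated corollary.
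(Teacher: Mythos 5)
Your proposal is correct and follows essentially the same route as the paper's (much terser) proof: reuse the degree-$(q+1)$ cone $C_1$ from Theorem \ref{main}, take the degree-$s$ cone to be the join of the $s$ lines of $S$ with the general point $P$, and invoke the fact from the proof of Theorem \ref{main} that $C_1$ contains no line of $\P^3_{\overline{\F}_q}$ defined over $\F_q$ (indeed meets each transversely) to rule out common components and conclude via the degree count. Your additional bookkeeping about distinctness of the joining lines for general $P$ is a reasonable elaboration of what the paper leaves implicit.
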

\begin{proof}
The same degree $q+1$ cone $C_1$ from the proof of Theorem \ref{main} works in this case. The degree $s$ cone is the join of the $s$ lines with the general point $P$. It follows from the proof of Theorem \ref{main} that $C_1$ meets every line of $\P^3_{\F_q}$ transversely and thus that $\Pcal(S)$ is geproci.
\end{proof}

\begin{lemma}\label{lemma1}
Let $Z$ be an $\{a,b\}$-geproci set and let $Z'\subseq Z$ be a $\{c,b\}$-geproci subset, whose general projection shares with the general projection of $Z$ a minimal generator of degree $b$. Then the residual set $Z''=Z\setminus Z'$ is $\{a-c,b\}$-geproci.
\end{lemma}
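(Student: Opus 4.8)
The plan is to work directly with homogeneous ideals and the geometry of general projections, exploiting the fact that $Z'$ and $Z$ share a degree-$b$ curve in their general projections. Write $\overline{Z}$, $\overline{Z'}$, $\overline{Z''}$ for the images of $Z$, $Z'$, $Z''$ under projection from a general point $P$ to a plane $\Pi \cong \P^2$. Since projection from a general point is injective on a finite set, $\overline{Z} = \overline{Z'} \sqcup \overline{Z''}$ as sets of points in $\Pi$. By hypothesis, $\overline{Z}$ is a complete intersection of a curve $G_a$ of degree $a$ and a curve $G_b$ of degree $b$, and $\overline{Z'}$ is a complete intersection of a curve $G_c$ of degree $c$ and a curve $G_b'$ of degree $b$, where $G_b$ and $G_b'$ may be taken to be the \emph{same} curve — this is exactly what "shares a minimal generator of degree $b$" buys us, after possibly adjusting by the fact that the degree-$b$ generator is unique up to scalar once $a,c < b$ (and if $b \le a$ or $b \le c$ one argues separately or the statement is read with the shared generator as given).

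**First I would** fix the common degree-$b$ curve $G_b$ and observe that both $\overline{Z}$ and $\overline{Z'}$ lie on $G_b$; hence so does $\overline{Z''}$. Now I want to produce a degree-$(a-c)$ curve cutting out $\overline{Z''}$ on $G_b$. The natural candidate is $G_a / G_c$ in an appropriate sense: on the curve $G_b$, the divisor cut by $G_a$ is $\overline{Z}$ and the divisor cut by $G_c$ is $\overline{Z'} \le \overline{Z}$, so the "difference divisor" $\overline{Z}-\overline{Z'} = \overline{Z''}$ should be cut out by a form of degree $a - c$. To make this rigorous I would restrict to $G_b$ and use that on a plane curve of degree $b$, linear equivalence of the hyperplane-section divisors means: if $D_1 = \div(F_1)|_{G_b}$ and $D_2 = \div(F_2)|_{G_b}$ with $\deg F_1 = \deg F_2$ then $D_1 \sim D_2$; here instead $\div(G_a)|_{G_b} - \div(G_c)|_{G_b} = \overline{Z''} \ge 0$ is an effective divisor of degree $(a-c)b$, and I claim it is cut out by a degree-$(a-c)$ form. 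The cleanest route is a liaison/linkage argument: $\overline{Z'}$ is linked to the residual $\overline{Z''}$ inside the complete intersection $G_c \cap G_b$... but wait, $\overline{Z}$ is the complete intersection, not $G_c \cap G_b$. So instead: inside the complete intersection $\overline{Z} = G_a \cap G_b$, the subscheme $\overline{Z'}$ is cut (on $G_b$) by $G_c$, and its residual with respect to $G_a$ on $G_b$ is what we want; since $\overline{Z'} = G_c \cap G_b$ is itself a complete intersection contained in $G_a \cap G_b$, we get $G_c \mid G_a$ modulo $G_b$, i.e. $G_a = G_c \cdot G_{a-c} + G_b \cdot (\text{something})$ for some degree-$(a-c)$ form $G_{a-c}$. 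Then $\overline{Z''} = G_{a-c} \cap G_b$ scheme-theoretically, which exhibits $\overline{Z''}$ as a complete intersection of degrees $a-c$ and $b$.

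**The main obstacle** I expect is the algebraic step of showing $G_c$ divides $G_a$ modulo $(G_b)$ — equivalently, that $G_a$ lies in the ideal $(G_c, G_b)$. This is where the hypothesis that the degree-$b$ generators genuinely coincide is essential: one knows $I(\overline{Z'}) = (G_c, G_b)$ and $\overline{Z'} \subseteq \overline{Z} = V(G_a, G_b)$, so $G_a \in I(\overline{Z'}) = (G_c, G_b)$, giving $G_a = H\,G_c + K\,G_b$ with $\deg H = a-c$; set $G_{a-c} = H$. Then one must check $V(G_{a-c}, G_b)$ has no embedded or excess components and equals $\overline{Z''}$ as a set of the right cardinality $(a-c)b$ — this follows by counting: $\#\overline{Z} = ab$, $\#\overline{Z'} = cb$, so $\#\overline{Z''} = (a-c)b$, and $V(G_{a-c},G_b) \supseteq \overline{Z''}$ while $\deg(G_{a-c} \cap G_b) = (a-c)b$ by Bézout, forcing equality. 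A secondary subtlety is ensuring the chosen point $P$ is general simultaneously for $Z$, $Z'$, and $Z''$ and that the shared-generator hypothesis is projection-independent (or holds for general $P$); I would phrase the argument so that the hypothesis is invoked for the same general $P$ used throughout, and note that "general" conditions intersect in a dense open set. Finally, I would remark that the degree-$(a-c)$ cone over $Z''$ with vertex $P$ is the cone $V(G_{a-c})$ pulled back, completing the identification of $Z''$ as $\{a-c, b\}$-geproci.
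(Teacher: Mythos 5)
Your argument is correct, and it is essentially the standard liaison proof of this statement: the paper itself gives no argument but simply cites Lemma 4.5 of \cite{CFFHMSS} (noting the proof is characteristic-free), and that proof proceeds exactly as you do, writing $G_a = H G_c + K G_b$ via $G_a \in I(\overline{Z'}) = (G_c, G_b)$ and identifying $\overline{Z''}$ with the complete intersection $V(H)\cap V(G_b)$ by the B\'ezout count. The only steps worth making explicit are that $\overline{Z''}\subseteq V(H,G_b)$ uses the disjointness $\overline{Z'}\cap\overline{Z''}=\emptyset$ (so $G_c$ does not vanish at points of $\overline{Z''}$), and that $H$ and $G_b$ share no component since otherwise $G_a$ and $G_b$ would too.
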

\begin{proof}
This is Lemma 4.5 of \cite{CFFHMSS}, and the proof still works in positive characteristic.
\end{proof}

 \begin{thm}
    The complement $Z\subseq\P^3_{\F_q}$ of a maximal partial spread of deficiency $d$ is a nontrivial $\{q+1,d\}$-geproci set. Furthermore, when $d>q+1$, $Z$ is also not a half grid.
    \end{thm}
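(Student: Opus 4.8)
The plan is to start from the known geproci structure of the full point set $\P^3_{\F_q}$ and use the residuation lemma (Lemma \ref{lemma1}) to pass to the complement of the partial spread. Write $N=\Pcal(S)$ for the set of points covered by the maximal partial spread $S$, which has $q^2+1-d$ lines, so that $Z=\P^3_{\F_q}\setminus N$. By Theorem \ref{main}, $\P^3_{\F_q}$ is $(q+1,q^2+1)$-geproci, and by the Corollary above, $N$ is $\{q^2+1-d,\,q+1\}$-geproci; its general projection is cut out by the degree $q+1$ cone $C_1$ (common to both) together with the union of $q^2+1-d$ lines. Since $C_1$ is the minimal generator of degree $q+1$ shared between the projections of $\P^3_{\F_q}$ and of $N$, Lemma \ref{lemma1} (with $a=q^2+1$, $b=q+1$, $c=q^2+1-d$) applies and yields that $Z=\P^3_{\F_q}\setminus N$ is $\{(q^2+1)-(q^2+1-d),\,q+1\}=\{d,q+1\}$-geproci. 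That disposes of the geproci claim modulo checking the hypothesis of Lemma \ref{lemma1} genuinely holds here — i.e.\ that $C_1\cap\Pi$ really is a minimal generator of the projected ideal of $Z$ and of $N$ in the relevant degree, not an extraneous common factor; this is where I expect to lean on the transversality statement established inside the proof of Theorem \ref{main}, namely that $C_1$ meets every $\F_q$-line transversely, so $C_1$ contains no line of the spread and hence does not split off.

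Next I would argue nontriviality. The set $Z$ is not contained in a plane: a plane of $\P^3_{\F_q}$ has $q^2+q+1$ points while $|Z|=(q+1)(q^2+1)-(q^2+1-d)(q+1)=(q+1)d$, and for the relevant range of $d$ (in particular $d\geq 1$, and certainly once $d>q+1$) one checks $(q+1)d > q^2+q+1$ is not forced but more to the point the points of $Z$ genuinely span $\P^3$ because $N$ omits at most a few lines' worth of points and the leftover points are spread across all of $\P^3_{\F_q}$; a clean way to see $Z$ is nondegenerate is that its complement $N$ is a union of lines, so if $Z$ lay in a plane $H$ then $\P^3_{\F_q}\setminus H\subseteq N$ would be a union of lines, which is false since $\P^3_{\F_q}\setminus H$ is an affine space containing points on pairwise non-skew configurations. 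And $Z$ is not a grid: a $(d,q+1)$-grid would have its points lying on $d$ skew lines meeting $q+1$ transversals, but $Z$ consists of \emph{all} the $\F_q$-points off a maximal partial spread, and if those lay on $d$ skew lines $\ell_1,\dots,\ell_d$ then $\{\ell_1,\dots,\ell_d\}$ together with $S$ would cover $\P^3_{\F_q}$ by mutually-skew lines — forcing $d+(q^2+1-d)=q^2+1$, contradicting $d>0$. (For small $d$ one must be slightly more careful, but $d>q+1$ makes this robust.) Hence $Z$ is a nontrivial $\{q+1,d\}$-geproci set.

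Finally, the half-grid claim when $d>q+1$. Recall a $\{\mu,\lambda\}$-half grid is a nontrivial geproci set lying on $\mu$ mutually-skew lines with $\lambda$ points on each. Our $Z$ is $\{q+1,d\}$-geproci, so the only possible half-grid structures are with $(\mu,\lambda)=(q+1,d)$ or $(\mu,\lambda)=(d,q+1)$. The case $(\mu,\lambda)=(d,q+1)$: $Z$ would lie on $d$ mutually-skew lines each carrying $q+1$ points, i.e.\ each such line is an $\F_q$-line fully contained in $Z$; but then, as in the nontriviality argument, those $d$ lines together with $S$ would be $d+(q^2+1-d)=q^2+1$ mutually-skew lines covering $\P^3_{\F_q}$, a spread, yet $S$ was assumed \emph{maximal partial} of positive deficiency and hence not extendable to a spread — contradiction. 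The case $(\mu,\lambda)=(q+1,d)$: $Z$ would lie on $q+1$ mutually-skew lines with $d$ points of $Z$ on each, so $|Z|\leq (q+1)(q+1)$ if the lines were $\F_q$-lines, but in a half grid the covering lines need only be lines of $\P^3_{\obar\F_q}$; still, $|Z|=(q+1)d$ forces exactly $d$ of the $(q+1)$-or-fewer $\F_q$-points of each covering line to lie in $Z$, and since $d>q+1$ this is impossible because a line of $\P^3_{\obar\F_q}$ contains at most $q+1$ points of $\P^3_{\F_q}$ — and all points of $Z$ are $\F_q$-rational. So neither half-grid shape is available, and $Z$ is not a half grid.

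The step I expect to be the main obstacle is verifying the shared-minimal-generator hypothesis of Lemma \ref{lemma1} precisely — making sure $C_1\cap\Pi$ appears as an honest minimal generator of the projected ideals of both $Z$ and $N$ in degree $q+1$ (so that the residuation is valid), rather than merely vanishing on them; the transversality of $C_1$ to all $\F_q$-lines from the proof of Theorem \ref{main} is the key input, but one must also rule out that $\dim[I(\obar Z)]_{q+1}$ or $\dim[I(\obar N)]_{q+1}$ behaves degenerately, which may require a short dimension count analogous to Lemma \ref{thingy}.
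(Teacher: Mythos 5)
Your proposal follows essentially the same route as the paper: the geproci claim via the Corollary and Lemma \ref{lemma1} applied to the partial spread and its complement, nondegeneracy by showing the spread's lines cannot absorb all the points off a plane, and both the non-grid and non-half-grid claims by observing that maximality of the partial spread forbids $q+1$ collinear points of $Z$ while $d>q+1$ rules out $d$ collinear points on an $\F_q$-rational set. The only difference is cosmetic: where you gesture at the affine complement of a plane not being coverable by the spread's lines, the paper makes the same idea precise with the count $|Z\cap H|\leq (q^2+q+1)-(q^2+1-d)=q+d<(q+1)d=|Z|$, using that the mutually skew lines meet $H$ in distinct points.
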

    
    \begin{proof}
    The first sentence of the Theorem comes directly from Corollary 1 and Lemma 1, except for being nontrivial. To demonstrate that $Z$ is nontrivial, suppose $Z$ is contained in a plane $H$. Let $Z'$ be the complement of $Z$. Then $Z'$ consists of $q+1$ points on $q^2+1-d$ lines. At most one of those lines can be in $H$, but each of the lines meet $H$. Thus $Z'$ has at least $q^2+1-d$ points in $H$, so $Z$ consists of at most $q^2+q+1-(q^2+1-d)=q+d$ points. This is impossible since $|Z|=(q+1)d>q+d$.

    Now suppose that $Z$ is a grid. Thus it consists of $q+1$ points on each of $d$ lines. But $Z'$ comes from a maximal partial spread, so $Z$ contains no set of $q+1$ collinear points. Thus $Z$ cannot be a grid, so $Z$ is nontrivial.
    
    Now we will prove that $Z$ is a nontrivial non-half grid if $d>q+1$. Recall that every line in $\P^3_{\F_q}$ consists of $q+1$ points. If $Z$ were a half grid, then either it contains subsets of $d$ collinear points or subsets of $q+1$ collinear points, but $d>q+1$, so the latter would be true. But we know from the above that $Z$ contains no subset of $q+1$ collinear points.
    
    \end{proof}
    \subsection{Examples}
    
    \begin{ex}
    \normalfont
    By \cite{H}, if $q\geq 7$ and $q$ is odd, then $\P^3_{\F_q}$ has a maximal partial spread of size $n$ for each integer $n$ in the interval $\ds\frac{q^2+1}{2}+6\leq n\leq q^2-q+2$. In terms of deficiency $d=q^2+1-n$, we get the inequalities $q-1\leq d\leq\ds\frac{q^2+1}{2}-6$. Thus for every odd prime power $q\geq 7$ there is a maximal partial spread in $\P^3_{\F_q}$ of deficiency $d>q+1$ and thus a nontrivial non-half grid $(q+1,d)$-geproci set.
    \end{ex}
    \begin{rmk}
    \normalfont
    In addition to Heden's bounds \cite{H} showing the existence of maximal partial spreads, Mesner has provided a lower bound for the size of the deficiency $d$ at $\sqrt{q}+1\leq d$ \cite{M}. Glynn has provided an upper bound for $d$ at $d\leq (q-1)^2$ \cite{G}.
    \end{rmk}
     \begin{ex}
    \normalfont
    By Lemma \ref{lemma1}, for any line $L\subseq\P^3_{\F_2}$, the set $Z=\P^3_{\F_2}\setminus L$ is a $(3,4)$-geproci half grid. In fact, $Z$ has the same combinatorics as $D_4$, shown in Figure \ref{fig:d42} (that is, $Z$ consists of 12 points, each of which is on 4 lines, with each line containing 3 of the points). Specifically, in Figure \ref{fig:d42} we see $\P^3_{\F_2}\setminus V(x+y+z,w)$.
    \end{ex}
    \begin{center}
    \begin{figure}[H]
    \centering
        \includegraphics[scale=0.6]{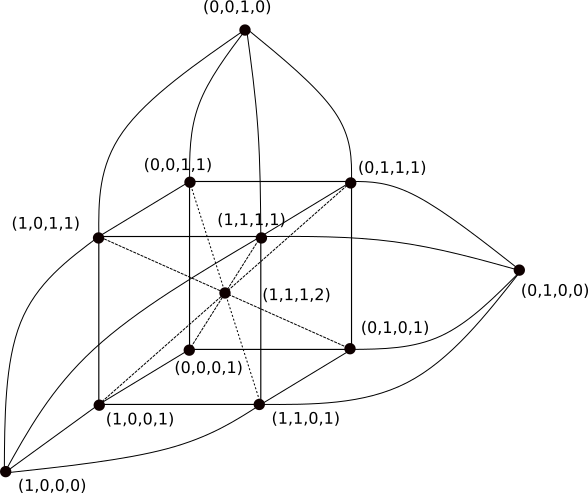}
        \caption{A $D_4$ in any characteristic.}
        \label{fig:d42}
        \end{figure}
    \end{center}
    \begin{ex}
    \normalfont
    There is (up to projective equivalence) a unique maximal partial spread in $\P^3_{\F_3}$ \cite{S}. This spread contains seven lines (as opposed to a complete spread, which contains ten). The complement $Z$ of the points of the maximal partial spread is a set of $12$ points in $\P^3_{\F_3}$ that is $(3,4)$-geproci and nontrivial. Furthermore, $Z$ has the same combinatorics as the $D_4$ configuration (that is, $Z$ is a set of 12 points, each of which is on 4 lines, with each line containing 3 of the points). Note that $Z$ is then a half grid, as shown in Figure \ref{fig:d42}. Specifically, Figure \ref{fig:d42} exhibits the points of $\P^3_{\F_3}$ in the complement of the maximal partial spread given by the seven lines $V(x+y,y+z+w)$, $V(x-y-z,y+w)$, $V(x-y+w,y+z)$, $V(x+y+z,w)$, $V(x-y+z, z+w)$, $V(x+y-z,x+w)$, and $V(x+z,x+y+w)$.
    \end{ex}

    \begin{ex}
    \normalfont
    There are (up to projective equivalence) fifteen maximal partial spreads in $\P^3_{\Z/7\Z}$ of size 45 and invariant under a group of order 5 (as opposed to a complete spread, which contains 50 lines) \cite{S}. Let $Z$ be the complement of the set of points of any of these maximal partial spreads. Then $Z$ is a set of 40 points that is a nontrivial $(5,8)$-geproci non-half grid. Furthermore, $Z$ has the same combinatorics as the Penrose configuration of 40 points \cite{CFFHMSS}.
    
    Note that if we look at two non-isomorphic maximal partial spreads $M$ and $M'$, and consider their complements $Z$ and $Z'$, then $Z$ and $Z'$ are non-isomorphic nontrivial non-half grid $(5,8)$-geproci sets. In fact, some such sets have stabilizers of different sizes! Of the fifteen up to isomorphism, there are nine with stabilizers of size 10, there is one with a stabilizer of size 20, there is one with a stabilizer of size 60, and there are four with stabilizers of size 120.

    An example of such a geproci set is\begin{align*}
    \{(0,0,1,3),(0,1,3,3),(0,1,3,5),(0,1,4,6),\\
(0,1,6,5),(1,0,1,3),(1,0,2,6),(1,0,4,5),\\
(1,0,4,6),(1,1,0,1),(1,1,0,4),(1,1,1,4),\\
(1,1,5,2),(1,2,1,6),(1,2,3,3),(1,2,5,2),\\
(1,2,6,5),(1,3,2,1),(1,3,4,4),(1,3,5,2),\\
(1,3,6,0),(1,4,0,5),(1,4,2,4),(1,4,4,1),\\
(1,4,6,2),(1,5,0,4),(1,5,1,0),(1,5,2,0),\\
(1,5,3,0),(1,5,3,1),(1,5,3,3),(1,5,3,6),\\
(1,5,4,5),(1,5,5,0),(1,5,5,2),(1,5,6,3),\\
(1,6,0,3),(1,6,1,5),(1,6,2,1),(1,6,6,6)\}.\\
    \end{align*}
This example is the complement of a maximal partial spread of size 45 with a stabilizer of size 60.
    
    We also used Macaulay2 to check that at least one configuration of each size stabilizer is Gorenstein. This contrasts with the case in characteristic 0, where only one nontrivial Gorenstein geproci set is known, up to projective equivalence: the Penrose configuration. \cite{CFFHMSS}

    One can determine this using the following commands in Macaulay2 with the example set of points from above.

    \texttt{i1:K=toField(ZZ/7[a,b,c,d]);}

    \texttt{i2:R=K[x,y,z,w];}

    \texttt{i3:V=\{\{0,0,1,3\},\{0,1,3,3\},\{0,1,3,5\},\{0,1,4,6\},\\
\{0,1,6,5\},\{1,0,1,3\},\{1,0,2,6\},\{1,0,4,5\},\\
\{1,0,4,6\},\{1,1,0,1\},\{1,1,0,4\},\{1,1,1,4\},\\
\{1,1,5,2\},\{1,2,1,6\},\{1,2,3,3\},\{1,2,5,2\},\\
\{1,2,6,5\},\{1,3,2,1\},\{1,3,4,4\},\{1,3,5,2\},\\
\{1,3,6,0\},\{1,4,0,5\},\{1,4,2,4\},\{1,4,4,1\},\\
\{1,4,6,2\},\{1,5,0,4\},\{1,5,1,0\},\{1,5,2,0\},\\
\{1,5,3,0\},\{1,5,3,1\},\{1,5,3,3\},\{1,5,3,6\},\\
\{1,5,4,5\},\{1,5,5,0\},\{1,5,5,2\},\{1,5,6,3\},\\
\{1,6,0,3\},\{1,6,1,5\},\{1,6,2,1\},\{1,6,6,6\}\};}

\texttt{i4:IV=\{\};}

\texttt{i5:for i from 0 to \#V-1 do \{A=trim ideal(V\_i\_0*y-V\_i\_1*x,\\ V\_i\_0*z-V\_i\_2*x,V\_i\_0*w-V\_i\_3*x,V\_i\_1*z-V\_i\_2*y,V\_i\_1*w-V\_i\_3*y,\\ V\_i\_2*w-V\_i\_3*z);IV=IV|\{A\}\};}

\texttt{i6:I=intersect(IV);}

\texttt{i7:betti res I}

\texttt{o7:}\begin{tabular}{ccccc}
     & 0& 1 & 2 & 3 \\
     total:&1&5&5&1\\
     0:&1&$\cdot$&$\cdot$&$\cdot$\\
     1:&$\cdot$&$\cdot$&$\cdot$&$\cdot$\\
     2:&$\cdot$&$\cdot$&$\cdot$&$\cdot$\\
     3:&$\cdot$&5&$\cdot$&$\cdot$\\
     4:&$\cdot$&$\cdot$&5&$\cdot$\\
     5:&$\cdot$&$\cdot$&$\cdot$&$\cdot$\\
     6:&$\cdot$&$\cdot$&$\cdot$&$\cdot$\\
     7:&$\cdot$&$\cdot$&$\cdot$&1\\
\end{tabular}

We can see from the Betti table that this set of points is Gorenstein. A similar calculation works to show the other geproci sets are Gorenstein.
    \end{ex}
    
    This pattern leads us to the following question: 
    \begin{qstn}
    Given the complement of a maximal partial spread $Z\subseq\P^3_{\F_q}$, when does $Z$ correspond to a nontrivial geproci set that exists in $\P^3_\C$? That is, when does there exist a nontrivial geproci set in $\P_\C^3$ that has the same combinatorics as $Z$?
    \end{qstn}
    
    \section{The Geproci Property with Infinitely-Near Points}
We can also consider configurations of points that include \textit{infinitely-near points}.

\begin{defn}
\normalfont
Let $A$ be a smooth point on an algebraic variety $X$. Let $\text{Bl}_A(X)$ denote the \textbf{blowup} of $X$ at $A$. Then a point $B\in\text{Bl}_P(X)$ is \textbf{infinitely-near} $A$ if $\pi_A(B)=A$ where $\pi_A:\text{Bl}_A(X)\to X$ is the standard blowup map.

On the other hand, if and $\pi_A(B)\neq A$, then $B$ and $A$ are \textbf{distinct}.
\end{defn}

Intuitively, $B$ corresponds to the direction of a line through $A$. In the plane, we can consider how a point $A$ and a point $B$ that is infinitely-near $A$ can uniquely determine a line, the same way a line can be uniquely determined by two distinct points. This is akin to determining a line from a point and a slope. In $\P^3$, we will consider how infinitely-near points impose conditions on forms the same way distinct points can.

We can extend the definition of geproci to include configurations with infinitely-near points by realizing $Z$ as a non-reduced 0-dimensional subscheme of $\P^3$. For example, let $A\in\P^3$ be a point and $L$ a line through $A$. Let $B$ be the point infinitely near $A$ corresponding to $L$. Then $I(\{A,B\})=I(L)+I(A)^2$ and the ideal of the image $\{\overline{A},\overline{B}\}$ of $\{A,B\}$ under projection from a point $P\notin L$ is $I(\overline{L})+I(\overline{A})^2$, where $\overline{L}$ is the image of $L$. A scheme $Z$ including infinitely near points is geproci if the projection $\overline{Z}$ of $Z$ from a general point $P$ to a plane is a complete intersection as a subscheme of $\P^2$.

In the following sets of points in $\P^3_{\F_2}$, we will denote a point $A$ together with a point infinitely-near $A$ as $A\times 2$. We will then specify what line the infinitely-near point corresponds to.

\begin{ex}\label{quasi}
\normalfont
We will consider the set of nine (not distinct) points in $\P^3_{K}$ where $\kar\,K=2$: $$Z=\{(1,0,0,0)\times 2, (0,1,0,0)\times 2,(0,0,1,0)\times 2, (0,0,0,1)\times 2,(1,1,1,1)\}$$ by choosing infinitely-near points for each of $(1,0,0,0)$, $(0,1,0,0)$, $(0,0,1,0)$, and $(0,0,0,1)$ to be the point that corresponds to the (respective) direction of the line through the given point and the point $(1,1,1,1)$.

The projection $\overline{Z}$ of these 9 points to the plane $w=0$ from a general point takes $(0,0,1)$, $(0,1,0)$, $(1,0,0)$ to themselves and $(1,1,1,1)$ and $(0,0,0,1)$ to general points. After a change of coordinates we can map the image of $(1,1,1,1)$ to $(1,1,1)$ and the image of $(0,0,0,1)$ to $(a,b,c)$. We will denote $$Z'=\{(0,0,1)\times 2,(0,1,0)\times 2,(1,0,0)\times 2,(a,b,c)\times 2,(1,1,1)\},$$ where the tangent directions of each point of multiplicity 2 correspond to the line connecting the point with $(1,1,1)$. Then $Z'$ is the base locus of a specific type of pencil of cubics called a quasi-elliptic fibration. Specifically, the quasi-elliptic pencil given by $Z$ has Dynkin diagram $\wilde{A}_1^{\+8}$. One can read more about the connection between Dynkin diagrams and (quasi-)elliptic fibrations in e.g. Cossec and Dolgachev \cite{CD}.

We can see that the conic $C_1=V(xy+xz+yz)$ contains the points $(0,0,1)$, $(0,1,0)$, and $(1,0,0)$, and the tangent lines of the three points all meet $(1,1,1)$. Additionally, the line $L_1$ connecting $(a,b,c)$ and $(1,1,1)$ has the appropriate slope to contain the remaining infinitely-near point. Therefore the cubic given by $C_1\cup L_1$ contains $Z'$.

Similarly, we can also construct a conic $C_2=V(cxy+bxz+ayz+(a+b+c)y^2)$ that contains the points $(0,0,1)$, $(0,1,0)$, $(a,b,c)$, and their respective infinitely-near points. Letting $L_2$ denote the line connecting $(1,0,0)$ and $(1,1,1)$, we get another cubic $C_2\cup L_2$ containing $Z'$. The two cubics share no components in common, and so $Z'$ is a complete intersection of two cubics.

Since $Z'$ is projectively equivalent to $\overline{Z}$, we get $\overline{Z}$ is a complete intersection. Therefore $Z$ is $(3,3)$-geproci. Note that $Z$ is a nontrivial non-half grid. What makes this work is the fact that the tangent lines of a conic in characteristic 2 are concurrent.
\end{ex}
\begin{rmk}
    \normalfont We can also see that Example \ref{quasi} provides examples of unexpected cones. Letting $\{\alpha_0,\alpha_1,\alpha_2,\alpha_3\}=\{x,y,z,w\}$, we can construct a (non-minimal) generating set for $I(Z)$ as $$\mathcal{A}=\{\alpha_i\alpha_j(\alpha_k+\alpha_\ell):i,j\neq k,\,\,i,j\neq\ell,\,\,k\neq\ell\}.$$ (Note that this set includes both the polynomials where $i=j$ and $i\neq j$.) A computation in Macaulay2 reveals that the ideal generated by $\mathcal{A}$ can be minimally generated by 11 cubic polynomials. Therefore $\dim [I(Z)]_3=11$. We also have $\ds\binom{3+2}{3}=10$, so $\dim[I(Z)]_3-\ds\binom{3+2}{3}=1$.

    But we also know that $\dim[I(Z)+I(P)^{3}]_3\geq 2$ by for example taking the join of the two planar cubics making up the complete intersection of $Z'$ with the vertex $P$. Therefore we have the inequality $\dim[I(Z)+I(P)^{3}]_3>\dim[I(Z)]_3-\ds\binom{3+2}{3}>0$ and so the cubic cones are indeed unexpected.
\end{rmk}
\vspace{\baselineskip}

\vspace{\baselineskip}
\begin{ex}
\normalfont
Let $\kar\,K=2$. Now consider the 6 points $$Z=\{(1,0,0,0)\times 2, (0,1,0,0)\times 2,(0,0,1,0)\times 2\},$$ where the infinitely near point for each is in the direction of $(0,0,0,1)$. We will show that this is $(2,3)$-geproci.

\begin{proof}

First we will look at the following scheme of points in $\P^2$: $$Z'=\{(1,0,0)\times 2, (0,1,0)\times 2,(0,0,1)\times 2\}$$ where the infinitely-near point for each is in the direction of $(1,1,1)$. We will show that this set of 6 points is a complete intersection of a conic and a cubic, and then show that a general projection of $Z$ onto any plane is projectively equivalent to $Z'$. Note that $Z'$ is contained in the conic $A=V(xy+xz+yz)$ and the cubic $B=V((x+y)(x+z)(y+z))$. Also note that $A$ and $B$, have no components in common, since $A$ is an irreducible conic and $B$ is the union of three lines. Therefore $Z'$ is a complete intersection of a conic and a cubic.

Now let us return to $Z\subseq\P^3$. Let us project $Z$ from a general point $P\in \P^3$ onto a general plane $\Pi\subseq \P^3$. Since the lines corresponding to each infinitely-near point meet at $(0,0,0,1)$, and since projection from a point preserves lines (and therefore the intersection of lines), the images of the three infinitely-near points under the projection $\pi_{P,\Pi}$ will also correspond to three concurrent lines. In other words, $Z$ will map to the set $$\pi_{P,\Pi}(Y)=\{\pi_{P,\Pi}(1,0,0,0)\times 2,\pi_{P,\Pi}(0,1,0,0)\times 2,\pi_{P,\Pi}(0,0,1,0)\times 2\}$$ where each infinitely-near point is in the direction of $\pi_{P,\Pi}(0,0,0,1)$. For a general point $P$, the images of the three ordinary points in $Z$ and the point $\pi_{P,\Pi}(0,0,0,1)$ will not be collinear. Therefore we can map $\Pi$ to $\P^2$ and use an automorphism of the plane to map $\pi_{P,\Pi}(1,0,0,0)$ to $(1,0,0)$, $\pi_{P,\Pi}(0,1,0,0)$ to $(0,1,0)$, $\pi_{P,\Pi}(0,0,1,0)$ to $(0,0,1)$, and $\pi_{P,\Pi}(0,0,0,1)$ to $(1,1,1)$. Then we are in the same situation as $Z'$, which is a complete intersection of a conic and a cubic.

Note that $Z$ is a half grid, since the cubic containing $Z$ is a union of three lines, but the conic is irreducible.

The unique quadric cone containing $Z$ with a vertex at $(a,b,c,d)$ is given by $cdxy+bdxz+adyz+abw^2$.
\end{proof}
\end{ex}
\vspace{\baselineskip}
\begin{ex}
\normalfont
Let $\kar\,K=2$. Now consider the 9 points $$Z=\{(1,0,0,0)\times 2, (1,1,0,0)\times 2, (0,1,0,0)\times 2, (0,0,1,0)\times 2, (0,0,0,1)\},$$ by choosing as our infinitely-near points for $(1,0,0,0)$, $(1,1,0,0)$, $(0,1,0,0)$, and $(0,0,1,0)$ the points that correspond to the respective directions to the point $(0,0,0,1)$. First we will look at the following set of points in $\P^2_{K}$: $$Z'=\{(1,0,0)\times 2,(a,0,1)\times 2,(0,0,1)\times 2,(1,1,1)\times 2,(0,1,0)\}$$ where $a\neq 0$ and each infinitely-near point is in the direction of $(0,1,0)$. These nine points are a complete intersection of $(y^2+xz)(x+az)$ and $y^2(x+z)$. Since every set of four points, no three of which are collinear, maps can be mapped to every other such set of four points by a linear automorphism, every projection of $Z$ onto any plane $\Pi$ will be isomorphic to the configuration $Z'$ for some $a\in K\setminus\{1,0\}$, and so $Z$ is a nontrivial $(3,3)$-geproci set.
\end{ex}
\vspace{\baselineskip}
The preceding example is particularly interesting because the general projection of $X$ is not only a $(3,3)$ complete intersection, but as in Example \ref{quasi}  it is also the set of base points of a quasi-elliptic fibration (specifically one with Dynkin diagram $\wilde{A}_1^{\+4}\+\wilde{D}_4$).


\printbibliography

\end{document}